\renewcommand\section{\@startsection {section}{1}{\z@}%
                                   {-3.5ex \@plus -1ex \@minus -.2ex}%
                                   {2.3ex \@plus.2ex}%
                                   {\normalfont\Large\bfseries
                                    \setcounter{equation}{0}
                                    \setcounter{thm}{0}}}
\newtheorem{thm}{Theorem}
\newtheorem{defn}[thm]{Definition}
\newtheorem{rem}[thm]{Remark}
\newtheorem{exa}[thm]{Example}
\newcommand{\q}{\quad}
\newcommand{\qq}{\qquad}
\newcommand{\CommaBin}{\mathbin{\raisebox{0.5ex}{,}}}
\title{\textbf{Stability of a Volterra Integral Equation on Time Scales}}
\author{\small \textbf{Alaa E.\ Hamza$^{1}$}, \textbf{and Ahmed G.~Ghallab$^{2}$}  \\
\small$^\textbf{{1}}$ Department of Mathematics, Faculty of Science, Cairo University, Giza, Egypt.\\
\small E-mail: hamzaaeg2003@yahoo.com\\
\small$^\textbf{{2}}$ Department of Mathematics, Faculty of  Science,\\
 \small {Fayoum University, Fayoum, Egypt.}\\
 \small E-mail: agg00@fayoum.edu.eg}
\date{}
\begin{document}
 \thispagestyle{empty} \maketitle
 \begin{abstract}
 In this paper, we study Hyers-Ulam stability for integral equation of Volterra type in time scale setting. Moreover we study the stability of the considered equation in Hyers-Ulam-Rassias sense. Our technique depends on successive approximation method, and we use time scale variant of induction principle to show that \eqref{eq100} is stable on unbounded domains in Hyers-Ulam-Rassias sense.
 \end{abstract}
 \section{Introduction}
In 1940, S. M. Ulam gave a wide range of talks at the Mathematics Club of
the University of Wisconsin, in which he discussed a number of important unsolved
problems. One of them was the following question:

Let $G_1$ be a group and let $G_2$ be a group endowed with a metric $d$. Given $\epsilon > 0$, does there exist a $\delta > 0$ such that if a mapping $h:G_1\rightarrow G_2 $ satisfies the inequality
$$
d(h(xy),h(x)h(y))<\delta,
$$
for all $x,y\in G_1$, can we find a homomorphism $\theta:G_1\rightarrow G_2$ such that
$$
d(h(x),\theta(x))< \epsilon,
$$
for all $x\in G_1$?

This problem was solved by Hyers for approximately additive mappings on Banach spaces \cite{hyers}. Rassias generalized, in his work \cite{rass}, the result obtained by Hyers. Since then the stability of many functional, differential, integral equations have been investigated, see \cite{gach}, \cite{andras}, \cite{jung}, and references there in.

In this paper we shall consider the non-homogeneous volterra integral equation of the first kind
\begin{equation}\label{eq100}
  x(t)=f(t)+\int_a^tk(t,s)x(s)\Delta s, \qq t\in I_\mathbb{T}:=[a,b]_\mathbb{T},
\end{equation}
where $f\in C_{rd}(I_\mathbb{T},\mathbb{R})$, $k\in C_{rd}(I_\mathbb{T}\times I_\mathbb{T},\mathbb{R})$ and $x$ is the unknown function.

First, we introduce the basic definitions that will be used through out this paper.
\begin{defn}
The integral equation \eqref{eq100} is said to be has\emph{ Hyers-Ulam stability} on $I_\mathbb{T}$ if for any $\varepsilon>0$ and each $\psi\in C_{rd}(I_\mathbb{T},\mathbb{R})$ satisfying
$$
|\psi(t)-f(t)-\int_a^tk(t,s)\psi(s)\Delta s|<\varepsilon, \q \forall \ t \in I_\mathbb{T};
$$
then there exists a solution $\varphi$ of equation \eqref{eq100} and a constant $C\geq0$ such that
$$
|\varphi(t)-\psi(t)|\leq C\,\varepsilon, \q \forall \ t \in I_\mathbb{T}.
$$
\end{defn}
The constant $C$ is called Hyers-Ulam stability constant for equation \eqref{eq100}.
\begin{defn}
The integral equation \eqref{eq100} is said to be has\emph{ Hyers-Ulam-Rassias stability}, with respect to $\omega$, on $I_\mathbb{T}$ if for each  $\psi\in C_{rd}(I_\mathbb{T},\mathbb{R})$ satisfying
$$
|\psi(t)-f(t)-\int_a^tk(t,s)\psi(s)\Delta s|<\omega(t), \q \forall \ t \in I_\mathbb{T};
$$
for some fixed $\omega\in C_{rd}(I_\mathbb{T},[0,\infty))$, then there exists a solution $\varphi$ of equation \eqref{eq100} and a constant $C>0$  such that
$$
|\varphi(t)-\psi(t)|\leq C \omega(t),\q \forall \ t \in I_\mathbb{T}.
$$
\end{defn}

 we shall investigate Hyers-Ulam stability and Hyers-Ulam-Rassias stability of integral equation \eqref{eq100} on both bounded and unbounded time scales intervals.

\section{Hyers-Ulam stability}
In this section we investigate Hyers-Ulam stability of equation on $I_\mathbb{T}:=[a,b]_\mathbb{T}$ by using iterative technique.
\begin{thm}\label{hyers}
The integral equation \eqref{eq100} has Hyers-Ulam stability on $I_\mathbb{T}:=[a,b]_\mathbb{T}$.
\end{thm}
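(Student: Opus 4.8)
The plan is to use the successive approximation (Picard iteration) method on the interval $I_{\mathbb{T}} = [a,b]_{\mathbb{T}}$, exploiting that this interval is compact so that the kernel $k$ and the data are bounded. Suppose $\psi \in C_{rd}(I_{\mathbb{T}},\mathbb{R})$ satisfies $|\psi(t) - f(t) - \int_a^t k(t,s)\psi(s)\Delta s| < \varepsilon$ for all $t$. Define $M := \sup_{(t,s)\in I_{\mathbb{T}}\times I_{\mathbb{T}}} |k(t,s)|$, which is finite by $rd$-continuity on a compact set. The first step is to construct the exact solution $\varphi$ of \eqref{eq100} as the uniform limit of the iterates $\varphi_0 := f$, $\varphi_{n+1}(t) := f(t) + \int_a^t k(t,s)\varphi_n(s)\Delta s$; one shows by induction that $|\varphi_{n+1}(t) - \varphi_n(t)| \le \|f\|_\infty \, M^{n+1} h_{n+1}(t,a)$, where $h_{n+1}$ is the $n{+}1$-st time scale monomial, so the series telescopes to a uniformly convergent limit $\varphi$ because $\sum_n M^n h_n(b,a) \le e_M(b,a) < \infty$. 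This $\varphi$ solves \eqref{eq100}.

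The second step estimates $|\varphi(t) - \psi(t)|$ directly. Write $u(t) := |\varphi(t) - \psi(t)|$. Using the two defining (in)equalities,
$$
u(t) \le \Bigl|\varphi(t) - f(t) - \int_a^t k(t,s)\psi(s)\Delta s\Bigr| + \varepsilon \le \int_a^t |k(t,s)|\, u(s)\,\Delta s + \varepsilon \le M\int_a^t u(s)\,\Delta s + \varepsilon.
$$
The third step applies Gronwall's inequality on time scales to the relation $u(t) \le \varepsilon + M\int_a^t u(s)\,\Delta s$, yielding $u(t) \le \varepsilon\, e_M(t,a) \le \varepsilon\, e_M(b,a)$ for all $t \in I_{\mathbb{T}}$. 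Setting $C := e_M(b,a)$, which is a finite nonnegative constant depending only on the kernel bound and the endpoints, gives $|\varphi(t) - \psi(t)| \le C\varepsilon$, establishing Hyers-Ulam stability.

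The main obstacle I anticipate is not conceptual but technical: verifying carefully that the Picard iterates converge uniformly in the time scale setting and that the limit is genuinely a solution requires the time scale monomial estimates $h_n$ and the exponential-bound $\sum_n M^n h_n(b,a) < \infty$, together with an appeal to uniform convergence passing inside the delta-integral. Everything else — the Gronwall step and the chain of triangle-inequality estimates — is routine once the boundedness of $k$ on the compact square $I_{\mathbb{T}}\times I_{\mathbb{T}}$ is in hand. An alternative that avoids constructing $\varphi$ from scratch would be to invoke an existing existence-uniqueness theorem for \eqref{eq100} and then run only the Gronwall estimate, but since the successive approximation scheme is advertised in the abstract, I would present the self-contained version.
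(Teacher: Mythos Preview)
Your argument is correct, but it is organized differently from the paper's. The paper also uses successive approximation, but it seeds the iteration with the \emph{approximate} solution $\psi$ itself: $\psi_0=\psi$, $\psi_n(t)=f(t)+\int_a^t k(t,s)\psi_{n-1}(s)\,\Delta s$. The very first step then gives $|\psi_1-\psi_0|<\varepsilon$, and induction yields $|\psi_i-\psi_{i-1}|\le \varepsilon M^{i-1}(t-a)^{i-1}/(i-1)!$; the limit $\varphi$ is the exact solution and the stability bound $|\psi-\varphi|\le (1+e^{M(b-a)})\varepsilon$ falls out of the same telescoping sum. No Gr\"onwall is used in the theorem itself.

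By contrast, you decouple the two tasks: first you build $\varphi$ by iterating from $f$ (so the convergence estimate involves $\|f\|_\infty$ and the time scale monomials $h_n$, not $\varepsilon$), and only afterwards compare $\varphi$ with $\psi$ via the Gr\"onwall inequality $u(t)\le \varepsilon+M\int_a^t u(s)\,\Delta s\Rightarrow u(t)\le \varepsilon\,e_M(t,a)$. This is exactly the content of the paper's Remark~2.2 specialized to $\varepsilon_1=0$, so in effect you have merged the theorem and the remark into one argument. What you gain is modularity and a slightly sharper constant $e_M(b,a)$ in place of $1+e^{M(b-a)}$; what the paper's route gains is that a single iteration simultaneously proves existence and delivers the $\varepsilon$-estimate, with no separate appeal to Gr\"onwall.
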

\begin{proof}
For given $\varepsilon>0$ and each $\psi\in C_{rd}(I_\mathbb{T}, \mathbb{R})$ satisfying
$$|\psi(t)-f(t)-\int_a^tk(t,s)\psi(s)\Delta s|<\varepsilon, \q \forall \ t \in I_\mathbb{T},$$
we consider the recurrence relation
\begin{equation}\label{rec}
\psi_{n}(t):=f(t)+\int_a^tk(t,s)\psi_{n-1}(s)\Delta s, \q n=1,2,3,\ldots
\end{equation}
for $t\in I_\mathbb{T}$ with $\psi_0(t)=\psi(t)$. We prove that $\{\psi_{n}(t)\}_{n\in\mathbb{N}}$ converges uniformly to the unique solution of Equation \eqref{eq100} on $I_\mathbb{T}$. We write $\psi_{n}(t)$ as a telescoping sum
$$
\psi_n(t) = \psi _0(t)+\sum_{i=1}^{n} [\psi_{i}(t)-\psi_{i-1}(t)],
$$
 so
\begin{equation}\label{eq6}
 \lim_{n\rightarrow\infty}\psi_n(t) = \psi _0(t)+\sum_{i=1}^{\infty} [\psi_{i}(t)-\psi_{i-1}(t)], \q \forall\ t\in I_\mathbb{T}.
\end{equation}

Using mathematical induction we prove the following estimate
\begin{equation}\label{rr}
|\psi_{i}(t)-\psi_{i-1}(t)|\leq \varepsilon\,M^{i-1}\frac{(t-a)^{i-1}}{(i-1)\,!}\CommaBin \q \forall \ t\in I_\mathbb{T}.
\end{equation}
For $i=1$ we have
$$|\psi_1(t)-\psi(t)|<\varepsilon.$$
So the estimate \eqref{rr} holds for $i=1$. Assume that the estimate \eqref{rr} is true for $i=n\geq1$. We have
\begin{align*}
|\psi_{n+1}(t)-\psi_{n}(t)| & \leq\int_a^t|k(t,s)||\psi_{n}(s)-\psi_{n-1}(s)|\Delta s \\
&\leq M  \int_a^t\varepsilon\,M^{n-1}\frac{(s-a)^{n-1}}{(n-1)\,!}ds\\
&\leq \varepsilon\,M^{n}\frac{(t-a)^{n}}{n\,!}\CommaBin
\end{align*}
hence the estimate \eqref{rr} it valid for $i=n+1$. This shows that the estimate \eqref{rr} is true for all $i\geq1$ on $I_\mathbb{T}$.

See that
 \begin{align*}
   |\psi_{i}(t)-\psi_{i-1}(t)| &\leq \varepsilon\,M^{i-1}\frac{(t-a)^{i-1}}{(i-1)\,!} \\
    & \leq \varepsilon\,M^{i-1}\frac{(b-a)^{i-1}}{(i-1)\,!}\CommaBin
 \end{align*}
and
 $$
 \sum_{i=1}^\infty\varepsilon\,M^{i-1}\frac{(b-a)^{i-1}}{(i-1)\,!}=\sum_{i=0}^\infty \varepsilon\,\frac{[(M(b-a)]^{i}}{i\,!}=\varepsilon\,e^{M(b-a)}.
 $$
Applying Weierstrass M-Test, we conclude that the infinite series
 $$\sum_{i=1}^{\infty} [\psi_{i}(t)-\psi_{i-1}(t)]$$
converges uniformly on $t\in I_\mathbb{T}$. Thus from \eqref{eq6}, the sequence $\{\psi_{n}(t)\}_{n\in\mathbb{N}}$ converges uniformly on $I_\mathbb{T}$ to some $\varphi(t)\in C_{rd}(I_\mathbb{T},\mathbb{R})$. Next, we show that the limit of the sequence $\varphi(t)$ is the exact solution of \eqref{eq1}. For all $t\in I_\mathbb{T}$ and each $n\geq 1$, we have
 $$
 \Big|\int_a^tk(t,s)\psi_{n}(s)-\int_a^tk(t,s)\varphi(s)\Delta s\Big|\leq M\int_a^t|\psi_{n}(s)-\varphi(s)|\Delta s.
 $$
Taking the limits as $n\rightarrow\infty$ we see that the right hand side of the above inequality tends to zero and so
$$
\lim_{n\rightarrow\infty}\int_a^tk(t,s)\psi_{n}(s)\Delta s=\int_a^tk(t,s)\varphi(s)\Delta s, \q \forall \ t \in I_\mathbb{T}.
$$
By letting $n\rightarrow\infty$ on both sides of \eqref{rec}, we conclude that $\varphi(t)$ is the exact solution of \eqref{eq1} on $I_\mathbb{T}$. Then there exists a number $N$ such that $|\psi_N(t)-\varphi(t)|\leq \varepsilon$. Thus
\begin{align*}
|\psi-\varphi|&\leq |\psi(t)-\psi_N(t)|+|\psi_N(t)-\varphi(t)|\\
&\leq |\psi(t)-\psi_1(t)|+|\psi_1(t)-\psi_2(t)|+\cdots +|\psi_{n-1}(t)-\psi_N(t)|+|\psi_N(t)-\varphi(t)|\\
&\leq \sum_{i=1}^N|\psi_{i-1}(t)-\psi_i(t)|+|\psi_N(t)-\varphi(t)|\\
&\leq \sum_{i=1}^N\varepsilon\,M^{i-1}\frac{(b-a)^{i-1}}{(i-1)!}+|\psi_N(t)-\varphi(t)|\\
&\leq \varepsilon\,e^{M(b-a)}+\varepsilon=\varepsilon\,(1+e^{M(b-a)})\,\varepsilon\leq C\,\varepsilon.
\end{align*}
which completes the proof.
\end{proof}

\begin{rem}
We can find an estimate on the difference of two approximate solutions of the integral equation \eqref{eq100}. Let $\psi_1$ and $\psi_2$ are two different approximate solutions to \eqref{eq100} that is for some $\varepsilon_1,\varepsilon_2>0$, and for all $t\in I_\mathbb{T}$
\begin{equation}\label{approx1}
 \Big|\psi_1(t)-f(t)-\int_a^tk(t,s)\psi_1(s)\Delta s\Big|\leq \varepsilon_1,
\end{equation}
 and
\begin{equation}\label{approx2}
  \Big|\psi_2(t)-f(t)-\int_a^tk(t,s)\psi_2(s)\Delta s\Big|\leq \varepsilon_2.
\end{equation}
So
$$
|\psi_1(t)-\psi_2(t)|\leq(\varepsilon_1+\varepsilon_2)e_M(t,a), \q \forall \ t \in I_\mathbb{T}.
$$
If $\psi_1$ is an exact solution of equation \eqref{eq100}, then we have $\varepsilon_1=0$.
\end{rem}
\begin{proof}
Adding the two inequalities \eqref{approx1}, \eqref{approx2} and making use of $|\alpha|-|\beta|\leq|\alpha-\beta|\leq |\alpha|+|\beta|$, we get
$$
\Big|\psi_1(t)-\psi_2(t)-\int_a^tk(t,s)[\psi_1(s)-\psi_2(s)]\Delta s\Big|\leq \varepsilon_1+\varepsilon_2.
$$
$$
|\psi_1(t)-\psi_2(t)|-\Big|\int_a^tk(t,s)[\psi_1(s)-\psi_2(s)]\Big|\leq \varepsilon_1+\varepsilon_2
$$
for all $t\in I_\mathbb{T}$ where $\varepsilon:=\varepsilon_1+\varepsilon_2$.\\
Put
$$
\xi(t):=|\psi_1(t)-\psi_2(t)|, \q \forall \ t\in I_\mathbb{T},
$$
then
 \begin{align*}
   \xi(t) &\leq \varepsilon+\int_a^t|k(t,s)|\xi(s)\Delta s \\
        &\leq \varepsilon+\int_a^tM\xi(s)\Delta s\\
        &\leq \varepsilon+e_M(t,a)\int_a^t\varepsilon \ \frac{M}{e_M(\sigma(s),a)}\Delta s,
 \end{align*}
where we make an application of Gr\"{o}nwall's inequality in the last step. By Theorem we have
$$
\int_a^t\frac{M}{e_M(\sigma(s),a)}\Delta s=-\int_a^t\Big[\frac{1}{e_M(s,a)}\Big]^{\Delta} \Delta s=\Big(1-\frac{1}{e_M(t,a)}\Big),
$$
thus
$$
\xi(t)\leq\varepsilon+\varepsilon \ [e_M(t,a)-1]=\varepsilon \ e_M(t,a),\q \forall \ t\in I_\mathbb{T}.
$$
\end{proof}

\section{Hyers-Ulam-Rassias Stability}
In this section we investigate a result concerning Hyers-Ulam-Rassias stability of equation \eqref{eq100} on both $I_\mathbb{T}:=[a,b]_\mathbb{T}$ and unbounded interval $[a,\infty)_\mathbb{T}$.
\begin{thm}
 Assume $\psi\in C_{rd}(I_\mathbb{T},\mathbb{R})$ satisfying
$$
|\psi(t)-f(t)-\int_a^tk(t,s)\psi(s)\Delta s|<\omega(t), \q \forall \ t \in I_\mathbb{T},
$$
for some fixed $\omega\in C_{rd}(I_\mathbb{T},\mathbb{R}_+)$ for which there exists a constant $P\in(0,1)$ such that
 $$
 \int_a^t\omega(s)\Delta s\leq P\,\omega(t),\q \forall \ t \in I_\mathbb{T}.
 $$
 Then there exist a  unique solution $\varphi$ of Equation \eqref{eq1} such that
 $$
 |\varphi(t)-\psi(t)|\leq \Big(1+\frac{M}{1-P}\Big)\cdot\omega(t), \q \forall \ t \in I_\mathbb{T}.
 $$

\end{thm}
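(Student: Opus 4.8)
The plan is to mimic the successive–approximation argument used in Theorem~\ref{hyers}, but to track the bound in terms of $\omega$ rather than $\varepsilon$. Define the iterates $\psi_n$ by the same recurrence \eqref{rec} with $\psi_0=\psi$, and write $\psi_n$ as the telescoping sum $\psi_0+\sum_{i=1}^n[\psi_i-\psi_{i-1}]$. The key difference is the inductive estimate: instead of \eqref{rr}, I would prove
\begin{equation*}
|\psi_i(t)-\psi_{i-1}(t)|\leq M^{i-1}P^{i-1}\,\omega(t),\qquad \forall\ t\in I_\mathbb{T},\ i\geq 1.
\end{equation*}
The base case $i=1$ is exactly the hypothesis $|\psi_1(t)-\psi_0(t)|=|\psi_1(t)-\psi(t)|<\omega(t)$. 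For the inductive step, one bounds
\begin{equation*}
|\psi_{n+1}(t)-\psi_n(t)|\leq \int_a^t|k(t,s)|\,|\psi_n(s)-\psi_{n-1}(s)|\Delta s
\leq M\cdot M^{n-1}P^{n-1}\int_a^t\omega(s)\,\Delta s\leq M^nP^n\,\omega(t),
\end{equation*}
using the crucial hypothesis $\int_a^t\omega(s)\Delta s\leq P\,\omega(t)$ in the last step.

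Next I would establish convergence. Since $P\in(0,1)$, the sum $\sum_{i=1}^\infty (MP)^{i-1}$ need not converge (if $M\geq 1/P$); so one should instead argue directly on the interval: if $MP<1$ the geometric series converges and Weierstrass's $M$-test gives uniform convergence of $\sum[\psi_i-\psi_{i-1}]$, hence $\psi_n\to\varphi$ uniformly with $\varphi\in C_{rd}(I_\mathbb{T},\mathbb{R})$. If $MP\geq 1$ one may need to partition $[a,b]_\mathbb{T}$ into finitely many subintervals on each of which the contraction constant is $<1$, or — more in the spirit of the constant stated — to keep $M$ small by noting that the factor $MP$ comes with a genuine contraction whenever the kernel bound $M$ satisfies $MP<1$; in the general statement I suspect the intended reading is that $M=\sup|k|$ and the estimate $\bigl(1+\tfrac{M}{1-P}\bigr)$ presupposes this. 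In any case, once uniform convergence holds, passing to the limit in \eqref{rec} — exactly as in the proof of Theorem~\ref{hyers}, via $\bigl|\int_a^t k(t,s)[\psi_n(s)-\varphi(s)]\Delta s\bigr|\leq M\int_a^t|\psi_n(s)-\varphi(s)|\Delta s\to 0$ — shows $\varphi$ solves \eqref{eq100}.

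For the final bound, I would sum the telescoping series: for each $t\in I_\mathbb{T}$,
\begin{equation*}
|\varphi(t)-\psi(t)|\leq \sum_{i=1}^\infty |\psi_i(t)-\psi_{i-1}(t)|
\leq \omega(t)\sum_{i=1}^\infty (MP)^{i-1}.
\end{equation*}
This already gives $|\varphi(t)-\psi(t)|\leq \frac{1}{1-MP}\,\omega(t)$; to recover the stated constant $\bigl(1+\frac{M}{1-P}\bigr)\omega(t)$ I would instead split off the first term, $|\psi_1(t)-\psi(t)|\leq\omega(t)$, and bound the tail $\sum_{i\geq 2}|\psi_i-\psi_{i-1}|$ using a slightly sharper inductive estimate of the form $|\psi_i(t)-\psi_{i-1}(t)|\leq M\,P^{i-2}\,\omega(t)$ for $i\geq 2$ (which follows because the first integration against $|k|$ contributes one factor $M$ and each subsequent step contributes only a factor $P$ via the $\omega$–hypothesis, not an additional $M$), so that the tail sums to $\frac{M}{1-P}\,\omega(t)$. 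Finally, uniqueness follows from the Remark's Gr\"onwall-type estimate: two solutions are approximate solutions with $\varepsilon_1=\varepsilon_2=0$, forcing $|\varphi_1(t)-\varphi_2(t)|\leq 0$. The main obstacle is reconciling the convergence of the iteration with the stated constant — i.e., pinning down precisely which normalization of $M$ (and whether a smallness condition like $MP<1$ is implicitly assumed) makes both the geometric series converge and the bound come out as $1+\frac{M}{1-P}$; the rest is a routine adaptation of Theorem~\ref{hyers}.
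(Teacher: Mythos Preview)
Your approach is exactly the paper's: the same iterative scheme, the same telescoping, and the same passage to the limit. The paper's key inductive claim is
\[
|\psi_n(t)-\psi_{n-1}(t)|\le M\,P^{\,n-1}\,\omega(t),
\]
with a \emph{single} factor of $M$; this is precisely the ``sharper'' estimate you guess at near the end, and with it both convergence (since $P<1$) and the constant $1+\tfrac{M}{1-P}$ fall out immediately. The paper obtains the ``$+1$'' not by separating the $i=1$ term as you suggest, but by picking $N$ with $|\psi_N(t)-\varphi(t)|\le\omega(t)$ and writing
\[
|\psi(t)-\varphi(t)|\le\sum_{k=1}^{N}M P^{\,k-1}\omega(t)+|\psi_N(t)-\varphi(t)|\le \frac{M}{1-P}\,\omega(t)+\omega(t).
\]

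Your justification of that sharper estimate, however, is not right: you say ``each subsequent step contributes only a factor $P$ \ldots\ not an additional $M$'', but every iteration integrates against $|k(t,s)|\le M$, so each step \emph{does} pick up an $M$. A careful induction gives exactly the bound you first wrote, $(MP)^{n-1}\omega(t)$, and your worry about the case $MP\ge 1$ is legitimate. The paper simply asserts the $MP^{\,n-1}$ bound (``by mathematical induction, it is easy to see'') and does not address this point; in that sense you have identified a genuine issue that the paper glosses over rather than introduced a new gap. If one wants the argument to go through as stated, the cleanest fix is to assume $MP<1$ (which the form of the constant already hints at), in which case your first estimate $(MP)^{n-1}\omega(t)$ already gives uniform convergence and the bound $\frac{1}{1-MP}\,\omega(t)$; the paper's constant $1+\frac{M}{1-P}$ is then just a (looser) variant obtained via the particular splitting above.
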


\begin{proof}
Consider the following iterative scheme
\begin{equation}\label{rec2}
\psi_{n}(t):=f(t)+\int_a^tk(t,s)\psi_{n-1}(s)\Delta s, \q n=1,2,3,\ldots
\end{equation}
 for $t\in I_\mathbb{T}$ with $\psi_0(t)=\psi(t)$. By mathematical induction, it is easy to see that the following estimate
 \begin{equation}\label{estim}
   |\psi_{n}(t)-\psi_{n-1}(t)|\leq MP^{\,n-1}\omega(t),
 \end{equation}
 holds for each $n\in \mathbb{N}$ and all $t\in I_\mathbb{T}$. By the same argument as in Theorem \ref{hyers} we prove that
 the sequence ${\psi_{n}(t)}_{n\in\mathbb{N}}$ converges uniformly on $I_\mathbb{T}$ to the unique solution, $\varphi$, of the integral equation \eqref{eq100}. Then there exists a positive integer $N$ such that $|\psi_N(t)-\varphi(t)|\leq w(t),\ t \in I_\mathbb{T}$. Hence
 \begin{align*}
|\psi-\varphi|&\leq |\psi(t)-\psi_N(t)|+|\psi_N(t)-\varphi(t)|\\
&\leq |\psi(t)-\psi_1(t)|+|\psi_1(t)-\psi_2(t)|+\cdots +|\psi_{n-1}(t)-\psi_N(t)|+|\psi_N(t)-\varphi(t)|\\
&\leq \sum_{k=1}^N|\psi_{k-1}(t)-\psi_k(t)|+|\psi_N(t)-\varphi(t)|\\
&\leq \sum_{k=1}^NMP^{\,k-1}\omega(t)+|\psi_N(t)-\varphi(t)|\\
&\leq \sum_{k=1}^NMP^{\,k-1}\omega(t)+|\psi_N(t)-\varphi(t)|\\
&\leq \sum_{k=1}^\infty MP^{\,k-1}\omega(t)+\omega(t)\\
&\leq M\cdot\frac{1}{1-P}\omega(t)+\omega(t)=\Big(1+\frac{M}{1-P}\Big)\omega(t),
\end{align*}
which shows that \eqref{eq100} has Hyers-Ulam-Rassias stability on $I_\mathbb{T}$.
\end{proof}


\begin{thm}
Assume that for a family of statements $A(t), \ t\in [t_0,\infty)_\mathbb{T}$ the following conditions holds
\begin{enumerate}
  \item $A(t_0)$ is true.
  \item for each right-scattered $t\in [t_0,\infty)_\mathbb{T}$ we have $A(t)\Rightarrow A(\sigma(t))$.
  \item for each right-dense $t\in [t_0,\infty)_\mathbb{T}$ there is a neighborhood $U$ such that $A(t)\Rightarrow A(s)$ for all $s\in U, s>t$.
  \item for each left-dense $t\in [t_0,\infty)_\mathbb{T}$ one has $A(s)$ for all $s$ with $s<t\Rightarrow A(t)$.
\end{enumerate}
Then $A(t)$ is true for all $t\in[t_0,\infty)_\mathbb{T}$.
\end{thm}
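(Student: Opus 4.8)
The plan is to argue by contradiction, using the single structural fact about time scales that makes the four hypotheses "cover" every point: a nonempty subset of $\mathbb{T}$ that is bounded below has an infimum lying in $\mathbb{T}$, and that infimum is either attained or is a one-sided accumulation point from above. Concretely, I would set
$$
S:=\{t\in[t_0,\infty)_\mathbb{T}: A(t)\text{ is false}\}
$$
and suppose, for contradiction, that $S\neq\emptyset$. Since $S$ is bounded below by $t_0$, the number $m:=\inf S$ is finite, and because $\mathbb{T}$ is a closed subset of $\mathbb{R}$ we have $m\in\mathbb{T}$, hence $m\in[t_0,\infty)_\mathbb{T}$ and hypotheses (1)--(4) may all be applied at $m$. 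Also, every $s\in[t_0,m)_\mathbb{T}$ satisfies $s<\inf S$, so $s\notin S$; that is, $A(s)$ holds for all $s\in[t_0,m)_\mathbb{T}$.

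First I would show that $A(m)$ itself is true, splitting into three cases according to the behaviour of $m$ on the left. If $m=t_0$, this is hypothesis (1). If $m>t_0$ is left-scattered, then $\rho(m)\in[t_0,m)_\mathbb{T}$, so $A(\rho(m))$ holds; since $\rho(m)$ is then right-scattered with $\sigma(\rho(m))=m$, hypothesis (2) yields $A(m)$. If $m>t_0$ is left-dense, then $A(s)$ holds for every $s<m$ in the domain, so hypothesis (4) gives $A(m)$. In all cases $A(m)$ is true, and therefore $m\notin S$.

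Next I would determine the behaviour of $m$ on the right. Because $m=\inf S$ while $m\notin S$, there are points of $S$ arbitrarily close to $m$ from above; if $m$ were right-scattered, then every element of $S$ exceeding $m$ would be $\geq\sigma(m)>m$, which would force $\inf S\geq\sigma(m)>m$, a contradiction. Hence $m$ is right-dense. Applying hypothesis (3) at $m$ together with the already-established $A(m)$ produces a neighbourhood, say $(m-\delta,m+\delta)$, such that $A(s)$ holds for all $s\in(m,m+\delta)_\mathbb{T}$. Combining this with the validity of $A$ on $[t_0,m]_\mathbb{T}$ shows that $A$ holds throughout $[t_0,m+\delta)_\mathbb{T}$. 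But $\inf S=m$ forces $S$ to meet $(m,m+\delta)$, which contradicts the preceding sentence. Hence $S=\emptyset$, i.e.\ $A(t)$ holds for all $t\in[t_0,\infty)_\mathbb{T}$.

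The routine parts are the three-case analysis at $m$ and the appeal to closedness of $\mathbb{T}$; the step I expect to be the real crux is the right-density argument, namely the observation that $m\notin S$ forces an accumulation of $S$ at $m$ strictly from above, and that this rules out $m$ being right-scattered, which is precisely what makes hypothesis (3) -- rather than (2) -- the one that applies at $m$ and closes the contradiction.
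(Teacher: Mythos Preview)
The paper does not supply a proof of this theorem; it is simply quoted as the time-scale induction principle and then applied in the next result. So there is no ``paper's own proof'' to compare against.

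Your argument is the standard infimum-of-the-failure-set proof and it is correct. The steps are all sound: closedness of $\mathbb{T}$ gives $m\in[t_0,\infty)_\mathbb{T}$; the three-case split (base point, left-scattered, left-dense) together with hypotheses (1), (2), (4) forces $A(m)$; and then $m\notin S$ together with $m=\inf S$ excludes $m$ being right-scattered, so hypothesis (3) applies and produces a right-neighbourhood disjoint from $S$, contradicting $m=\inf S$. One cosmetic remark: the sentence ``there are points of $S$ arbitrarily close to $m$ from above'' is not needed before the right-scattered exclusion, since that exclusion follows directly from $m\notin S$ and $\inf S=m$; the accumulation observation is really only used afterwards to locate a point of $S$ inside the neighbourhood furnished by (3).
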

Next, we prove that the integral equation \eqref{eq100} has Hyers-Ulam-Rassias on unbounded domains.
\begin{thm}
Consider the integral equation \eqref{eq1} with $I_\mathbb{T}:=[a,\infty)_\mathbb{T}$. Let $f\in C_{rd}([a,\infty)_\mathbb{T}, \mathbb{R})$ and $k(t,.)\in C_{rd}([a,\infty)_\mathbb{T}, \mathbb{R})$ for some fixed $t\in [a,\infty)_\mathbb{T}$. Assume $\psi\in C_{rd}(I_\mathbb{T},\mathbb{R})$ satisfying
\begin{equation}\label{eee}
\Big|\psi(t)-f(t)-\int_a^tk(t,s)\psi(s)\Delta s\Big|<\omega(t), \q  t \in I_\mathbb{T};
\end{equation}
where $\omega\in C_{rd}([a,\infty)_\mathbb{T},\mathbb{R}_+)$ with the property
\begin{equation}\label{cond}
  \int_a^t\omega (\tau)\Delta \tau \leq \lambda\,\omega(t), \q \forall \ t \in [a,\infty)_\mathbb{T}.
\end{equation}
for $\lambda\in(0,1)$. Then the integral equation \eqref{eq100} has Hyers-Ulam-Rassias stability, with respect to $\omega$, on $[a,\infty)_\mathbb{T}$.
\end{thm}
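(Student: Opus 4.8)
The plan is to run the successive-approximation scheme exactly as in Theorem~\ref{hyers} and the preceding bounded Hyers--Ulam--Rassias theorem, but to replace the Weierstrass $M$-test --- which is unavailable since $[a,\infty)_\mathbb{T}$ is not bounded --- by the time-scale induction principle stated just above. Starting from
$$
\psi_n(t):=f(t)+\int_a^tk(t,s)\psi_{n-1}(s)\Delta s,\qquad n\ge1,\ \ \psi_0=\psi,
$$
I would first establish the pointwise estimate $|\psi_n(t)-\psi_{n-1}(t)|\le M\lambda^{\,n-1}\omega(t)$ of \eqref{estim} by induction on $n$, exactly as in the bounded case: the case $n=1$ is \eqref{eee}, and the inductive step uses a bound $|k|\le M$ on the kernel together with the absorption property \eqref{cond} of $\omega$.

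To push this convergence and estimate from bounded windows to all of $[a,\infty)_\mathbb{T}$, let $A(t)$ be the assertion that the series $\sum_{i\ge1}(\psi_i-\psi_{i-1})$ converges uniformly on $[a,t]_\mathbb{T}$, that its sum $\varphi$ solves \eqref{eq100} on $[a,t]_\mathbb{T}$, and that $|\varphi(s)-\psi(s)|\le(1+\tfrac{M}{1-\lambda})\omega(s)$ for every $s\in[a,t]_\mathbb{T}$. I would then verify conditions~(1)--(4) of the induction theorem with $t_0=a$. Condition~(1) is immediate: at $t=a$ the integral term vanishes, so $\psi_n(a)=f(a)$ for $n\ge1$, and \eqref{eee} gives $|f(a)-\psi(a)|<\omega(a)$. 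Condition~(2), $t$ right-scattered: since $\int_a^{\sigma(t)}g\,\Delta s=\int_a^tg\,\Delta s+\mu(t)g(t)$, the value of each iterate at $\sigma(t)$ is determined by its restriction to $[a,t]_\mathbb{T}$, so one more use of \eqref{cond} carries $A(t)$ to $A(\sigma(t))$. Condition~(3), $t$ right-dense: a short continuity argument supplies a neighborhood $U$ on which the geometric series still sums to $\tfrac{M}{1-\lambda}\omega(\cdot)$ and the uniform limit still satisfies \eqref{eq100}. Condition~(4), $t$ left-dense: given $A(s)$ for all $s<t$, the iterates converge uniformly on $[a,t)_\mathbb{T}$, and rd-continuity of $f$, $k(t,\cdot)$ and $\omega$ lets one pass to the limit $s\uparrow t$ to obtain $A(t)$.

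By the induction principle $A(t)$ holds for every $t\in[a,\infty)_\mathbb{T}$, so the $\psi_n$ converge to a single $\varphi\in C_{rd}([a,\infty)_\mathbb{T},\mathbb{R})$ which solves \eqref{eq100} and satisfies $|\varphi(t)-\psi(t)|\le(1+\tfrac{M}{1-\lambda})\omega(t)$ for all $t$; this is exactly Hyers--Ulam--Rassias stability with respect to $\omega$. The step I expect to be the main obstacle is keeping the Rassias constant $C=1+\tfrac{M}{1-\lambda}$ independent of $t$: this is what genuinely distinguishes the unbounded case from the bounded one, it requires working with a kernel bound $M$ that is uniform over the whole half-line (with $\lambda$ correspondingly small, as already built into \eqref{cond}), and it is also what makes condition~(4) delicate, since there the $t$-independent estimate must be recovered in the limit $s\uparrow t$ without the convergence rate or the constant degenerating.
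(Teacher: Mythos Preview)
Your proposal is correct in spirit but takes a different route from the paper, and in fact your route is more elementary than you seem to realize. The paper's induction statement is $A(r)$: ``equation \eqref{eq100} has Hyers--Ulam--Rassias stability on $[a,r]_\mathbb{T}$''; the paper then \emph{builds the solution piece by piece}, extending $\varphi_r$ to $\varphi_{\sigma(r)}$ at right-scattered points and gluing in a fresh local solution $\varphi_\tau$ on $[r,\tau]_\mathbb{T}$ at right-dense points. In that framework the constant really does threaten to grow (the paper passes from $C_1$ to $C_1+MP$ in step III), which is exactly the obstacle you flag. Your choice of $A(t)$ is different: you first run the global iteration $\psi_n$ and obtain the pointwise geometric bound $|\psi_n-\psi_{n-1}|\le M\lambda^{\,n-1}\omega$ on all of $[a,\infty)_\mathbb{T}$ by ordinary induction on $n$, and only then invoke time-scale induction to certify convergence on each $[a,t]_\mathbb{T}$.

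What this buys you is that the constant $1+\tfrac{M}{1-\lambda}$ is fixed once and for all by the geometric series, so the difficulty you anticipate in step~(4) never materializes. Indeed, once the pointwise geometric estimate is in hand, the time-scale induction is essentially superfluous: for any fixed $t$, the rd-continuous function $\omega$ is bounded on the compact interval $[a,t]_\mathbb{T}$, so Weierstrass applies there directly, giving uniform convergence on $[a,t]_\mathbb{T}$, the solution property at $t$, and the Rassias bound at $t$ --- all without ever appealing to the four-step induction principle. So your argument is really the bounded Hyers--Ulam--Rassias proof applied verbatim at each point, and the paper's piecewise-extension machinery is bypassed entirely. The one genuine hypothesis your approach (and the paper's) silently needs is a uniform bound $|k(t,s)|\le M$ over the whole half-line, which you correctly isolate; without it neither argument gets off the ground.
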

\begin{proof}
We apply the time scale mathematical induction in $[a,\infty)_\mathbb{T}$ on the following statements

$A(r):$ the integral equation \eqref{eq100}
$$
x(t)=f(t)+\int_a^tk(t,s)x(s)\Delta s,
$$
has Hyers-Ulam-Rassias stability, with respect to $\omega$, on $[a,r]_\mathbb{T}$.

\textbf{I.} $A(a)$ is trivially true.

\textbf{II.} Let $r$ be a right scattered point and that $A(r)$ holds. That means equation \eqref{eq100} has Hyers-Ulam-Rassias stability, with respect to $\omega$, on $[a,r]_\mathbb{T}$, i.e. for each $\psi:[a,r]_{\mathbb{T}}\rightarrow \mathbb{R}$ satisfying
$$
\Big|\psi(t)-f(t)-\int_a^tk(t,s)\psi(s)\Delta s\Big|<\omega(t), \q  t \in [a,r]_\mathbb{T};
$$
where $\omega\in C_{rd}([a,r]_\mathbb{T},\mathbb{R}_+)$, then there exist a unique solution to equation \eqref{eq100} $\varphi_r:[a,r]_\mathbb{T}\rightarrow \mathbb{R}$ such that
$$
|\varphi_r(t)-\psi(t)|\leq C_1\,\omega(t),\q  t \in [a,r]_\mathbb{T}.
$$
We want to prove that $A(\sigma(r))$ is true. Assume that the function $\psi$ satisfies
$$
\Big|\psi(t)-f(t)-\int_r^tk(t,s)\psi(s)\Delta s\Big|<\omega(t), \q  t \in [r,\sigma(r)]_\mathbb{T}.
$$

Define the mapping $\varphi_{\sigma(r)}:[a,\sigma(r)]_\mathbb{T}\rightarrow \mathbb{R}$ such that
\begin{equation*}
\varphi_{\sigma(r)}(t)=
      \left\{
        \begin{array}{ll}
          \varphi_{r}(t), & \hbox{$t\in [a,r]_\mathbb{T}$;} \\
          f(\sigma(r))+\mu(r)k(\sigma(r),r)\varphi_{r}(r), & \hbox{$t=\sigma(r)$.}
        \end{array}
      \right.
\end{equation*}
It is clear that $\varphi_{\sigma(r)}$ is a solution of \eqref{eq100} on $[a,\sigma(r)]_\mathbb{T}$. Moreover, on we have
\begin{equation*}
|\varphi_{\sigma(r)}(t)-\psi(t)|=
      \left\{
        \begin{array}{ll}
          |\varphi_{r}(t)-\psi(t)|, & \hbox{$t\in [a,r]_\mathbb{T}$;} \\
          |f(\sigma(r))+\mu(r)k(\sigma(r),r)\varphi_{r}(r)-\psi(\sigma(r))|, & \hbox{$t=\sigma(r)$.}
        \end{array}
      \right.
\end{equation*}
See that
\begin{align*}
|\varphi_{\sigma(r)}(\sigma(r))-\psi(\sigma(r))|&=|f(\sigma(r))+\mu(r)k(\sigma(r),r)\varphi_{r}(r)-\mu(r)k(\sigma(r),r)\psi(r)\\
&\ \ \ \ \ \ \ +\mu(r)k(\sigma(r),r)\psi(r)-\psi(\sigma(r))|\\
&\leq |f(\sigma(r))+\mu(r)k(\sigma(r),r)\psi(r)-\psi(\sigma(r))|\\
&\ \ \ \ \ \ \ +|\mu(r)k(\sigma(r),r)||\varphi_{r}(r)-\psi(r)|\\
&\leq \omega(\sigma(r))+MC_1\,\mu(r)\omega(r).
\end{align*}
So we have
\begin{equation*}
|\varphi_{\sigma(r)}(t)-\psi(t)|\leq
     \left\{
       \begin{array}{ll}
         C_1\omega(t), & \hbox{$t\in [a,r]_\mathbb{T}$;} \\
         \omega(\sigma(r))+MC_1\,\mu(r)\omega(r), & \hbox{$t=\sigma(r)$.}
       \end{array}
     \right.
\end{equation*}

\textbf{III.} Let $r\in [a,\infty)_\mathbb{T}$ be right-dense and $U_r$ be a neighborhood of $r$. Assume $A(r)$ is true, i.e. for each $\psi:[a,r]_{\mathbb{T}}\rightarrow \mathbb{R}$ satisfying
$$
\Big|\psi(t)-f(t)-\int_a^tk(t,s)\psi(s)\Delta s\Big|<\omega(t), \q \text{for } t \in [a,r]_\mathbb{T},
$$
where $\omega\in C_{rd}([a,r]_\mathbb{T},\mathbb{R}_+)$, then there exist a unique solution to equation \eqref{eq100} $\varphi_r:[a,r]_{\mathbb{T}}\rightarrow \mathbb{R}$ such that
$$
|\varphi_r(t)-\psi(t)|\leq C_1\,\omega(t),\q \text{for } t \in [a,r]_\mathbb{T}.
$$
We show that $A(\tau)$ is true for all $\tau\in U_r\cap (r,\infty)_{\mathbb{T}}$. For $\tau>r$ assume that the function $\psi$ satisfies
$$
\Big|\psi(t)-f(t)-\int_r^tk(t,s)\psi(s)\Delta s\Big|<\omega(t), \q \text{for  } t \in [r,\tau]_{\mathbb{T}}.
$$
By Theorem for each $\tau\in U_r$, $\tau>r$, the integral equation
$$
x(t)=f(t)+\int_r^tk(t,s)x(s)\Delta s, \q \text{for } t\in [r,\tau]_{\mathbb{T}},
$$
has exactly on solution $\varphi_\tau(\cdot)$.  Therefore the mapping $\xi_\tau:[a,\tau]_{\mathbb{T}}\rightarrow \mathbb{R}$ defined by
\begin{equation*}
  \xi_s(t)=
   \left\{
     \begin{array}{ll}
       \varphi_r(t), & \hbox{$t\in[a,r]_\mathbb{T}$;} \\
       \varphi_\tau(t), & \hbox{$t\in[r,\tau]_\mathbb{T}$.}
     \end{array}
   \right.
\end{equation*}
is a solution of the integral equation
$$
x(t)=f(t)+\int_a^tk(t,s)x(s)\Delta s, \q \text{for } t\in [a,\tau]_{\mathbb{T}}.
$$
We have
  \begin{equation*}
  |\xi_s(t)-\psi(t)|=
   \left\{
     \begin{array}{ll}
       |\varphi_r(t)-\psi(t)|, & \hbox{$t\in[a,r]_{\mathbb{T}}$;} \\
       |\varphi_s(t)-\psi(t)|, & \hbox{$t\in[r,s]_{\mathbb{T}}$.}
     \end{array}
   \right.\\
  \end{equation*}
For $t\in [r,s]_\mathbb{T}$, see that
\begin{align*}
  |\varphi_s(t)-\psi(t)| &= \Big|f(t)+\int_r^tk(t,\tau)\varphi_s(\tau)\Delta \tau \\
    & -\psi(t)+\int_r^tk(t,\tau)\psi(\tau)\Delta \tau-\int_r^tk(t,\tau)\psi(\tau)\Delta \tau\Big|\\
    &\leq |f(t)+\int_r^tk(t,\tau)\psi(\tau)\Delta \tau-\psi(t)|+\int_r^t|k(t,\tau)||\varphi_s(\tau)-\psi(\tau)|\Delta\tau\\
    &\leq  C_1\omega(t)+M\int_r^t\omega(\tau)\Delta\tau\\
    &\leq  C_1\omega(t)+MP\omega(t)=(C_1+MP)\omega(t).
\end{align*}

\textbf{IV.} Let $r\in(a,\infty)_\mathbb{T}$ be left-dense such that $A(s)$ is true for all $s<r$. We prove that $A(r)$ by the same argument as in (\textbf{III}). By the induction principle the statement $A(t)$ holds for all $t\in[a,\infty)_\mathbb{T}$, that means the integral equation \eqref{eq100} has Hyers Ulam Rassias stability on $t\in[a,\infty)_\mathbb{T}$.
\end{proof}

Now we give an example to show that Hyers Ulam stability of volterra Integral equation \eqref{eq100} not necessarily holds on unbounded interval for general time scale.

\begin{exa}
The integral dynamic equation
$$
x(t)=1+5\int_0^tx(s)\Delta s, \q \q t\in [0,\infty)_\mathbb{T},
$$
has exactly one solution $x(t)=e_5(t,0)$, also we have $x(t)=0$ as approximate solution. From Bernoulli's inequality \cite{Boh1}, we have
$$
e_5(t,0)\geq 1+5(t-0),
$$
then we get
$$
\sup_{t\in [0,\infty)}|e_5(t,0)-0|\geq\sup_{t\in [0,\infty)}(1+5t)=\infty.
$$
Hence, there is no Hyers Ulam stability constant.
\end{exa}

\bibliographystyle{srtnumbered}

\end{document}